\documentclass[11pt]{amsart}

\usepackage[T1]{fontenc}
\usepackage[margin=1.1in]{geometry}
\usepackage{amsmath,amssymb,amsthm,mathtools}
\usepackage[expansion=false]{microtype}
\usepackage{hyperref}
\usepackage{enumitem}

\hypersetup{
  colorlinks=true,
  linkcolor=blue,
  citecolor=blue,
  urlcolor=blue
}

\newtheorem{theorem}{Theorem}[section]
\newtheorem{proposition}[theorem]{Proposition}
\newtheorem{lemma}[theorem]{Lemma}
\newtheorem{corollary}[theorem]{Corollary}
\theoremstyle{definition}

\newtheorem{remark}[theorem]{Remark}

\newcommand{\A}{\mathbb A}
\newcommand{\F}{\mathbb F}
\newcommand{\Z}{\mathbb Z}
\newcommand{\Q}{\mathbb Q}

\newcommand{\Gr}{\operatorname{Gr}}
\newcommand{\Hilb}{\operatorname{Hilb}}
\newcommand{\Hom}{\operatorname{Hom}}

\newcommand{\Sym}{\operatorname{Sym}}

\newcommand{\rank}{\operatorname{rank}}
\newcommand{\initt}{\operatorname{in}}
\newcommand{\m}{\mathfrak m}
\newcommand{\Rsm}{\mathcal R}
\newcommand{\Ttwo}{T_2}

\title[The one-step Shafarevich gap in embedding dimension five]
{The one-step Shafarevich gap in embedding dimension five}

\author{Chenyang Zhao}
\address{Department of Mathematics, Imperial College London, United Kingdom}
\email{cz2922@ic.ac.uk}

\date{}

\begin{document}

\begin{abstract}
Let $k$ be an algebraically closed field of characteristic zero and let
$S=k[x_1,\ldots,x_5]$ with maximal ideal $\m=(x_1,\ldots,x_5)$.  For a
codimension-$r$ subspace $Q\subset S_2$, set
\[
  I_Q=(Q)+\m^3.
\]
Then $S/I_Q$ has Hilbert function $(1,5,r)$.  We prove that the translated
one-step locus defined by these ideals is contained in the smoothable
component for every
\[
  r\in\{6,7,\ldots,15\}.
\]
We introduce a finite field differential rank certificate proving
dominance, for $6\le r\le 14$, of the Erman--Velasco map
\[
  \operatorname{GL}_5\times (\A^5)^r\dashrightarrow
  \Gr(r,\Sym^2 k^5),
  \qquad
  (g,a^{(1)},\ldots,a^{(r)})\mapsto
  g\cdot\langle q(a^{(1)}),\ldots,q(a^{(r)})\rangle,
\]
where
\[
  q(a)=\sum_{i=1}^5 a_i y_i^2-
  \left(\sum_{i=1}^5 a_i y_i\right)^2.
\]
The endpoint $r=15$ is handled separately by a flat degeneration of $21$ general
reduced points to the fat point defined by $\m^3$.  Combined
with the known small cases and with the known elementary components for
$r=3$ and $r=5$, this gives the complete one-step classification in embedding
dimension five: the one-step loci with Hilbert function $(1,5,r)$ are
smoothable for all $r\neq 3,5$, and the cases $r=3,5$ are precisely the generically reduced
elementary component cases.  In this sense the one-step Shafarevich gap in
embedding dimension five is completely resolved.
\end{abstract}

\maketitle

\section{Introduction}

Let $k$ be an algebraically closed field and let
\[
  S=k[x_1,\ldots,x_n], \qquad \m=(x_1,\ldots,x_n).
\]
For a codimension-$r$ subspace $Q\subset S_2$, the ideal
\[
  I_Q=(Q)+\m^3
\]
defines a local algebra
\[
  S/I_Q \simeq k\oplus S_1\oplus (S_2/Q)
\]
with Hilbert function $(1,n,r)$ and length $1+n+r$.  Translating the support
in $\A^n$ gives an irreducible family $V_{n,r}$ of dimension
\[
  \dim V_{n,r}=n+r\left(\binom{n+1}{2}-r\right).
\]
We call this the translated one-step locus.  The question whether $V_{n,r}$ is
an elementary component of the Hilbert scheme or instead lies in a larger
component is one of the classical problems around the work of Iarrobino
and Shafarevich on punctual Hilbert schemes and algebras with $\m^3=0$~\cite{Iarrobino,Shafarevich}.  Jelisiejew
states the precise dichotomy as the Shafarevich gap, Problem~XVI of
\cite{JelisiejewProblems}: for very compressed one-step ideals with
$\tfrac{(n-1)(n-2)}{6}+2<r\le\binom{n+1}{2}$, decide whether $V_{n,r}$ is a
generically reduced elementary component or is contained in the smoothable
component.  For $n=5$ this range is $5\le r\le15$.

This paper tackles the full one-step problem in embedding dimension five.  Put
\[
  N=\dim_k S_2=\binom{6}{2}=15.
\]
Throughout, $1\le r\le 15$, and $V_{5,r}\subset \Hilb^{6+r}(\A^5)$.  The small cases
are already understood.  The cases $r=1,2$ are smoothable by the small length
results of Cartwright--Erman--Velasco--Viray~\cite{CEVV}.  The case $r=3$ is
a generically reduced elementary component~\cite{Jelisiejew,GKS,GGGL}.
The case $r=4$ is smoothable.  In
particular, it is the exceptional embedding dimension five case in
Shafarevich's formula.  This is made explicit in the recent work of
Ga\l{}\k{a}zka--Keneshlou--\v{S}ivic~\cite[Proposition 4.2]{GKS}.  Finally,
Giovenzana--Giovenzana--Graffeo--Lella identify $(1,5,5)$ as a new
one-step elementary component and recall that $(1,5,4)$ is not such a
component~\cite[Remark 7.2]{GGGL}.

The remaining one-step cases in embedding dimension five therefore begin at
$r=6$.  Existing tangent space estimates rule out many of these loci as
\emph{generically reduced} elementary components, but such estimates do not
by themselves decide whether the loci are smoothable or lie in some other
non-smoothable component.  Thus the compatibility question is important:
our result should neither contradict the known elementary cases nor merely
reprove a tangent space obstruction.  The purpose of this paper is to prove
that the whole remaining tail lies on the smoothable side.  In this way the
result is a completion, for one-step loci in embedding dimension five, of the
case-by-case picture predicted by the existing literature.  Equivalently, it
settles the embedding dimension five instance of the one-step Shafarevich gap:
for every $1\le r\le15$, the locus $V_{5,r}$ is either one of the known
one-step elementary components or is contained in the smoothable component.

\begin{theorem}\label{thm:main-tail}
Let $k$ be an algebraically closed field of characteristic zero.  For every
\[
  6\le r\le 15,
\]
the translated one-step locus
\[
  V_{5,r}\subset \Hilb^{6+r}(\A^5)
\]
is contained in the smoothable component.
\end{theorem}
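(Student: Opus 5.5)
The plan follows the two mechanisms announced in the abstract: an Erman--Velasco limit-of-points construction for $6\le r\le 14$, and a direct flat degeneration for $r=15$. Throughout, translation along $\A^5$ is harmless, since the smoothable component is closed and translation-invariant. It therefore suffices to show that for a general codimension-$r$ subspace $Q\subset S_2$ the point $[S/I_Q]$ is smoothable.

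\textbf{The Erman--Velasco mechanism.} Take a general point $a\in\A^5$ and the $6$ reduced points $0,e_1,\dots,e_5$. We scale the configuration by $t\to 0$, replacing a point $p$ by $tp$. A flat limit of such configurations of $6+r$ points, grouped into clusters, has quadratic part governed by the forms $q(a)$ in the dual picture. Concretely, the inverse system of a limit of $6$ points spanning an affine frame contributes, in degree $2$, the quadric $q(a)=\sum a_iy_i^2-(\sum a_iy_i)^2$ up to linear change of coordinates. Taking $r$ such clusters at the origin (points $t\,g\,a^{(j)}$ together with a common frame), we obtain a length $1+5+r$ limit whose dual degree-$2$ piece is $g\cdot\langle q(a^{(1)}),\dots,q(a^{(r)})\rangle$. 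This piece is the annihilator of $Q$ in $\Sym^2k^5$.

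Hence every $Q$ whose orthogonal lies in the image of the map
\[
  \Phi\colon \operatorname{GL}_5\times(\A^5)^r\dashrightarrow \Gr(r,\Sym^2k^5)
\]
is a limit of reduced schemes. If $\Phi$ is dominant, its image contains a dense open set. Closedness of the smoothable locus then gives $V_{5,r}\subset$ smoothable component. The first step is therefore to verify carefully that these limits really are flat with the claimed ideal $(Q)+\m^3$. This means checking that the Hilbert function is $(1,5,r)$, which holds precisely when the $q(a^{(j)})$ are independent.

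\textbf{Dominance for $6\le r\le14$.} This is the main obstacle. The source has dimension $25+5r$ and the target has dimension $r(15-r)$. The inequality $25+5r\ge r(15-r)$ holds for all $r$, but it is not sufficient, since the stabilizer and fibre redundancies may drop rank. I would prove dominance by exhibiting a single point where the differential $d\Phi$ has rank $r(15-r)$.

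The differential is computed explicitly. Its image in $T\Gr=\Hom(W,\Sym^2/W)$, where $W=\langle q(a^{(j)})\rangle$, is spanned by two families:
\begin{itemize}[label={}]
  \item the $\mathfrak{gl}_5$-action $X\cdot q(a^{(j)})$;
  \item the partial derivatives $\partial q/\partial a_i$ at $a^{(j)}$, placed in the $j$-th slot.
\end{itemize}
These give an explicit integer matrix once integer points $a^{(j)}$ are chosen. To make this a rigorous finite certificate, I would reduce modulo a prime $p$ and compute the rank over $\F_p$. A rank of $r(15-r)$ over $\F_p$ forces the same rank over $\Q$, because a nonzero minor mod $p$ is a nonzero integer. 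The result then holds in characteristic zero over $k$ by base change.

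The remaining work is to search for random small integer points for each $r=6,\dots,14$ until full rank is achieved. I expect $r=14$ and the mid range near $r=7,8$ to be the tightest cases.

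\textbf{The case $r=15$.} Here $Q=0$, so $I_Q=\m^3$ and $V_{5,15}$ consists only of translates of the fat point of length $21$. There are $21=\dim S/\m^3$ general points, which impose independent conditions on quadrics. Their ideal therefore has no generators of degree $\le 2$, and the Hilbert function of the points is $(1,5,15)$ in degrees up to $2$.

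Scaling by $t\to0$ gives a flat family whose special fibre has ideal containing the initial forms, that is, the lowest-degree homogeneous parts of the elements of the ideal. These initial forms all have degree at least $3$, so the special fibre's ideal contains no forms of degree $\le 2$. By length count it must equal $\m^3$. This shows the fat point is smoothable and completes the proof.
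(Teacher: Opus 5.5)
Your strategy is the paper's: Erman--Velasco quadrics, dominance of $\Phi_r$ from one full-rank differential over $\F_p$ lifted to $\Q$, and scaling $21$ general points for $r=15$. As written, though, it has two genuine gaps.

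\textbf{The dominance step for $6\le r\le 14$ is not proved.} You never establish the fact that carries the theorem: that some point of $\operatorname{GL}_5\times(\A^5)^r$ has $\rank d\Phi_r=r(15-r)$. ``Search for random small integer points until full rank is achieved'' is a plan, and nothing you say guarantees the search succeeds.

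Your heuristic count is also too optimistic. Scalar matrices act trivially on $\Gr(r,\Ttwo)$: the direction $H=I$ sends $q_j\mapsto 2q_j\in U$. So the honest necessary condition is $24+5r\ge r(15-r)$, i.e.\ $(r-5)^2\ge 1$. This already rules out $r=5$ and is an equality at $r=6$. Hence $r=6$, not $r=14$ or $r=7,8$, is the tight case: all $54$ non-scalar directions must be independent in a $54$-dimensional tangent space. The method also genuinely fails at $r=3$: $\Phi_3$ cannot be dominant, since $V_{5,3}$ is an elementary component.

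The paper closes this step with an explicit certificate. It takes the first $r$ rows of a fixed $14\times5$ matrix $A_{14}$, at which, over $\F_{32003}$, one has $\rank U(A_r)=r$, $\rank G=r^2$ and $\rank[C\mid G]=15r$. Because both ranks are maximal, they persist over $\Q$. This matters: if you compute the quotient rank as $\rank[C\mid G]-\rank G$, your remark that ``a nonzero minor mod $p$ is a nonzero integer'' does not by itself control the difference.

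\textbf{The $r=15$ argument uses the wrong forms.} Along $t\Gamma$, $t\to0$, the limit ideal $I_0$ contains the \emph{top}-degree form $f_D$ of each $f\in I_\Gamma$, because $t^Df(x/t)\to f_D$. Lowest-degree forms compute a tangent cone at the origin, not this limit. A general $\Gamma$ misses the origin, so $I_\Gamma$ contains elements with nonzero constant term. Their lowest-degree forms are constants, so your claim that these forms all have degree at least $3$ is false.

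Even with top forms, ``$I_0$ contains forms of degree $\ge 3$'' does not imply ``$I_0$ contains no forms of degree $\le 2$''. You need $I_0=J$, which the paper obtains from $J\subseteq I_0$ and $\dim_k S/J=\dim_k S/I_\Gamma=21$. A more direct route: for each cubic monomial $m$, take $h\in S_{\le 2}$ agreeing with $m$ on $\Gamma$. Then $m-h\in I_\Gamma$ has top form $m$, so $\m^3\subseteq J\subseteq I_0$, and equal colength $21$ forces $I_0=\m^3$.

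\textbf{Minor: the Erman--Velasco configuration.} Your self-contained version of this step works once the configuration is stated correctly. It is a single frame $0,e_1,\dots,e_5$ plus the $r$ points $a^{(j)}$, not $r$ clusters. The limit then has Hilbert function $(1,5,r)$ exactly when the $q(a^{(j)})$ are independent. Independence also forces the $6+r$ points to be distinct, since $q(0)=q(e_i)=0$.
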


Combining Theorem~\ref{thm:main-tail} with the known cases gives the following
complete classification of the one-step loci in embedding dimension five.

\begin{corollary}\label{cor:classification}
Let $k$ be an algebraically closed field of characteristic zero.  Among the
translated one-step loci $V_{5,r}$, $1\le r\le 15$, the loci $V_{5,3}$ and
$V_{5,5}$ are the generically reduced elementary component cases.  For every
other $r$, the locus $V_{5,r}$ is contained in the smoothable component.
\end{corollary}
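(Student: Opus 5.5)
The statement to prove is Corollary~\ref{cor:classification}, and it follows by assembling Theorem~\ref{thm:main-tail} with the cases recalled in the introduction. I split the range $1\le r\le 15$ into four groups and handle each by citing the appropriate result: $r\in\{1,2\}$, $r=3$, $r=4$, $r=5$, and $6\le r\le 15$.

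For $r=1,2$ the length is $6+r\le 8$. The results of Cartwright--Erman--Velasco--Viray~\cite{CEVV} show that $\Hilb^d(\A^n)$ is irreducible for $d\le 7$. For $d=8$, the only extra component comes from local algebras with Hilbert function $(1,4,3)$. Our algebras instead have Hilbert function $(1,5,2)$, which has embedding dimension five, so they cannot lie on that component. Hence $V_{5,1}$ and $V_{5,2}$ lie in the smoothable component. For $r=4$, I cite \cite[Proposition 4.2]{GKS} and \cite[Remark 7.2]{GGGL}, which give that $V_{5,4}$ is smoothable. For $6\le r\le 15$ the conclusion is exactly Theorem~\ref{thm:main-tail}.

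For the two elementary cases, $r=3$ is the case $(1,5,3)$ treated in \cite{Jelisiejew,GKS,GGGL}, and $r=5$ is the new component $(1,5,5)$ identified in \cite{GGGL}. In both, the cited works show that $V_{5,r}$ is a generically reduced elementary component. An elementary component consists of schemes supported at a single point, so it is not the smoothable component, whose general member is reduced. Therefore these two loci are not contained in the smoothable component, and the dichotomy in the corollary is exact: $r=3,5$ are the elementary cases and every other $r$ is smoothable.

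The only step needing care is checking that the cited statements refer to exactly the loci $V_{5,r}$ with $I_Q=(Q)+\m^3$ and $Q$ general, taken in characteristic zero over an algebraically closed field. The $r=4$ and $r=1,2$ references are phrased in slightly different language (Shafarevich's formula, and the irreducibility bounds of \cite{CEVV}), so I will state the translation explicitly. No new computation is required.
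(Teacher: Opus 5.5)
Your assembly is the same as the paper's (Theorem~\ref{thm:main-tail} plus the cited cases $r\le 5$), and your handling of $r=3,5$ and $6\le r\le 15$ is fine. The step that fails is your justification for $r=2$. You infer from embedding dimension five that $(1,5,2)$-points cannot lie on the $(1,4,3)$-component of $\Hilb^8(\A^5)$. That component is the \emph{closure} of the $(1,4,3)$-locus, and embedding dimension is only upper semicontinuous in flat families, so it can jump from $4$ to $5$ in a limit. It does. Let $A_t$ have basis $1,x_1,\dots,x_4,e_1,e_2,e_3$, with $x_ix_j=\alpha_{ij}e_1+\beta_{ij}e_2+t\gamma_{ij}e_3$ for general symmetric $4\times 4$ matrices $\alpha,\beta,\gamma$ and $e_k x_i=e_ke_l=0$. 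Map $k[x_1,\dots,x_5]\to A_t$ by $x_i\mapsto x_i$ for $i\le 4$ and $x_5\mapsto e_3$. This is a flat family of length-$8$ quotients. For $t\ne 0$ it is a $(1,4,3)$-point, while at $t=0$ the kernel is a homogeneous ideal $(Q)+\m^3$ with $Q\subset S_2$ of codimension $2$ (with $x_5$ in the socle). So some points of $V_{5,2}$ do lie on the $(1,4,3)$-component.

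Those points are still smoothable, so $V_{5,2}\subset\Rsm^5_8$ is true. But what you need is membership in $\Rsm^5_8$, and ``not on the other component'' is neither true in general nor established by your argument. The paper simply cites \cite{CEVV} for $r=1,2$. The precise input is their result that every local algebra of length at most $8$ whose Hilbert function is not $(1,4,3)$ is smoothable, which gives $r=1,2$ at once. Your idea can be rescued at a \emph{general} point of $V_{5,2}$. For any point of the $(1,4,3)$-component supported at the origin, the degree-two parts of its inverse system lie in $\Sym^2$ of a hyperplane of $W$. This is a closed condition that a general pencil of quadrics in five variables violates, but you would have to supply that argument. A smaller point: \cite[Remark 7.2]{GGGL} only says $(1,5,4)$ is not an elementary component, which does not by itself give smoothability. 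For $r=4$ the smoothability comes from \cite[Proposition 4.2]{GKS}.
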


The new part of Theorem~\ref{thm:main-tail} is the dominance statement for
$6\le r\le 14$.  We use the smoothable regularity two ideals constructed by
Erman and Velasco~\cite{ErmanVelasco}.  In degree two, their inverse systems
are spanned by explicit quadrics
\[
  q(a)=\sum_{i=1}^5 a_i y_i^2-
  \left(\sum_{i=1}^5 a_i y_i\right)^2.
\]
For $6\le r\le 14$, we prove that the $\operatorname{GL}_5$-saturation of the
spans
\[
  \langle q(a^{(1)}),\ldots,q(a^{(r)})\rangle
\]
is dense in the full Grassmannian $\Gr(r,\Sym^2 k^5)$.  The proof is a
finite field differential rank certificate over $\F_{32003}$.  The computation
is exact, deterministic, and included as an ancillary Macaulay2 script~\cite{Macaulay2}.

The endpoint $r=15$ is different: the Grassmannian $\Gr(15,\Sym^2 k^5)$ is a
point, so there is no dominance computation to make.  In this case the algebra
is $S/\m^3$, and we prove smoothability by degenerating $21$ general reduced
points in $\A^5$ to the origin.

\section{One-step loci and inverse systems}

From now on, unless otherwise stated, $k$ is algebraically closed of
characteristic zero and
\[
  S=k[x_1,\ldots,x_5],\qquad \m=(x_1,\ldots,x_5).
\]
Let
\[
  W=\langle y_1,\ldots,y_5\rangle_k,
  \qquad
  \Ttwo=\Sym^2 W.
\]
The degree two Macaulay pairing identifies codimension-$r$ subspaces
$Q\subset S_2$ with $r$-dimensional subspaces
\[
  U=Q^\perp\subset \Ttwo.
\]
Conversely, if $U\subset \Ttwo$ is an $r$-plane, then
\[
  I_U=(U^\perp)+\m^3
\]
has Hilbert function $(1,5,r)$.  Thus the homogeneous one-step locus supported
at the origin is parametrized by
\[
  \Gr(r,\Ttwo)=\Gr(r,15).
\]
The translated one-step locus $V_{5,r}$ is the closure of the translates of
this homogeneous locus in $\Hilb^{6+r}(\A^5)$.

Let
\[
  \Rsm^5_d\subset \Hilb^d(\A^5)
\]
denote the smoothable component, namely the closure of the locus of $d$
distinct reduced points.  It is a closed subset of the Hilbert scheme and is
invariant under automorphisms of $\A^5$, in particular under translations.

\begin{lemma}\label{lem:dense-smoothable}
Suppose that a dense open subset of $\Gr(r,\Ttwo)$ parametrizes smoothable
ideals $I_U=(U^\perp)+\m^3$.  Then
\[
  V_{5,r}\subset \Rsm^5_{6+r}.
\]
\end{lemma}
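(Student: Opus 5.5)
The plan is a closure argument: the smoothable component is closed and translation-invariant, so smoothability of a dense open set of homogeneous ideals propagates to the whole locus. Let $G\subset \Gr(r,\Ttwo)$ be the dense open subset in the hypothesis. Consider the morphism
\[
  \phi\colon \A^5\times \Gr(r,\Ttwo)\to \Hilb^{6+r}(\A^5),\qquad (p,U)\mapsto \tau_p(I_U),
\]
where $\tau_p$ is translation by $p$. This is a morphism because $U\mapsto I_U$ comes from a flat family over the Grassmannian (the quotients $S/I_U$ all have length $6+r$, and the family is cut out by the universal subbundle's annihilator plus $\m^3$), and translation acts algebraically on the Hilbert scheme. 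By definition $V_{5,r}$ is the closure of the image of $\phi$.

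First, the image $\phi(\A^5\times G)$ consists of smoothable points. This holds because $I_U\in \Rsm^5_{6+r}$ for $U\in G$ by hypothesis, and $\Rsm^5_{6+r}$ is invariant under translations, as recorded just before the lemma. Second, $\A^5\times G$ is dense in the irreducible variety $\A^5\times\Gr(r,\Ttwo)$. By continuity of $\phi$, the image of the whole source lies in the closure of $\phi(\A^5\times G)$, since $\phi(\overline{Z})\subset\overline{\phi(Z)}$. Hence
\[
  V_{5,r}=\overline{\phi(\A^5\times \Gr(r,\Ttwo))}\subset \overline{\phi(\A^5\times G)}\subset \Rsm^5_{6+r},
\]
where the last inclusion uses that $\Rsm^5_{6+r}$ is closed.

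The only step needing care is that $\phi$ is a genuine morphism, so that continuity can be applied. For this I would write down the universal family over $\Gr(r,\Ttwo)$ explicitly. On the tautological quotient, $S_2\to \Ttwo^\vee\to \mathcal U^\vee$, the kernel $\mathcal Q$ is a rank-$(15-r)$ subbundle of $S_2\otimes\mathcal O$. The quotient $\mathcal O\oplus S_1\otimes\mathcal O\oplus S_2\otimes\mathcal O/\mathcal Q$ is then locally free of rank $6+r$ and is an algebra over $S\otimes \mathcal O$, so it is flat. The universal property of the Hilbert scheme then gives the map $U\mapsto I_U$, and composing with the translation action $\A^5\times\Hilb\to\Hilb$ gives $\phi$. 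Everything else is formal.
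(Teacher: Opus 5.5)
Your proof is correct and is essentially the paper's argument: it uses that $\Rsm^5_{6+r}$ is closed and translation invariant, applied to the image of a dense subset. The only differences are cosmetic. You translate before taking closures, working on $\A^5\times\Gr(r,\Ttwo)$, and you make explicit the morphism $U\mapsto I_U$ into $\Hilb^{6+r}(\A^5)$, which the paper's one-line closure step uses implicitly.
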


\begin{proof}
The set $\Rsm^5_{6+r}$ is closed in $\Hilb^{6+r}(\A^5)$.  Hence, if a dense
open subset of the homogeneous Grassmannian lies in $\Rsm^5_{6+r}$, then the
whole homogeneous one-step locus lies in $\Rsm^5_{6+r}$.  Since translations
preserve the property of being a flat limit of reduced points, all translates
also lie in $\Rsm^5_{6+r}$.  Taking the closure gives the claim.
\end{proof}

\section{The Erman--Velasco parametrization}

We recall the part of the construction of Erman--Velasco~\cite{ErmanVelasco}
that is needed here.  Let $d,e$ be positive integers with
$e\le \binom{d+1}{2}$ and let $T_2=\Sym^2\langle y_1,\ldots,y_d\rangle$.
For $a=(a_1,\ldots,a_d)\in \A^d$, set
\[
  q(a)=\sum_{i=1}^d a_i y_i^2-
  \left(\sum_{i=1}^d a_i y_i\right)^2.
\]
Erman and Velasco show that, on a nonempty open subset of $(\A^d)^e$, the
span
\[
  \langle q(a^{(1)}),\ldots,q(a^{(e)})\rangle\subset T_2
\]
is the degree two inverse system space of the homogeneous initial ideal of a
configuration of $1+d+e$ reduced points.  In particular, every $e$-plane
arising from this open set gives a smoothable one-step algebra.  This is the
content of \cite[Proposition 3.2]{ErmanVelasco}.  See also their differential
rank method in \cite[Lemma 3.4]{ErmanVelasco}.

In the present paper $d=5$.  For $1\le r\le 15$, define a rational map
\[
  \Phi_r:\operatorname{GL}_5\times (\A^5)^r\dashrightarrow \Gr(r,\Ttwo)
\]
by
\[
  (g,a^{(1)},\ldots,a^{(r)})
  \longmapsto
  g\cdot\langle q(a^{(1)}),\ldots,q(a^{(r)})\rangle,
\]
where the map is defined on the open set on which the displayed quadrics are
linearly independent.

\begin{lemma}\label{lem:dominance-implies-smoothable}
If $\Phi_r$ is dominant, then a dense open subset of $\Gr(r,\Ttwo)$
parametrizes smoothable one-step ideals.  Consequently
$V_{5,r}\subset \Rsm^5_{6+r}$.
\end{lemma}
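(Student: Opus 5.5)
The plan is to combine the Erman--Velasco result quoted above with the $\operatorname{GL}_5$-invariance of smoothability. Then Lemma~\ref{lem:dense-smoothable} does the rest.

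First, let $\Omega\subset(\A^5)^r$ be the nonempty open subset from \cite[Proposition 3.2]{ErmanVelasco} with $d=5$, $e=r$. On $\Omega$ the quadrics $q(a^{(1)}),\ldots,q(a^{(r)})$ are linearly independent, and their span $U$ is the degree two inverse system of the initial ideal of a configuration of $6+r$ reduced points. Hence $I_U=(U^\perp)+\m^3$ is smoothable for every such $U$. Next, note that $\operatorname{GL}_5$ acts on $S$ by linear automorphisms preserving $\m$, and compatibly on $\Ttwo$ via the contragredient action. One checks directly that $I_{g\cdot U}=g\cdot I_U$, and $g$ induces an automorphism of $\A^5$ fixing the origin. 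Since $\Rsm^5_{6+r}$ is invariant under automorphisms of $\A^5$, every point of
\[
  Z=\Phi_r\bigl(\operatorname{GL}_5\times\Omega\bigr)\subset \Gr(r,\Ttwo)
\]
parametrizes a smoothable ideal. It does not matter whether the action is written as $g$ or $g^{-T}$, since the image is the full orbit of each span.

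Second, I would use dominance to produce the dense open set. The set $\operatorname{GL}_5\times\Omega$ is a nonempty open, hence dense, subset of the irreducible source, and it lies in the domain of $\Phi_r$. Therefore $\Phi_r$ restricted to it is still dominant. By Chevalley's theorem, $Z$ is constructible. A dense constructible subset of the irreducible variety $\Gr(r,\Ttwo)$ contains a dense open subset $G^\circ$. Every $U\in G^\circ$ gives a smoothable ideal $I_U$, which proves the first assertion. Lemma~\ref{lem:dense-smoothable} then yields $V_{5,r}\subset\Rsm^5_{6+r}$.

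The only step needing real care is the first: matching the paper's convention for $g\cdot U$ with the induced action on ideals, so that the perpendicular of $g\cdot U$ is the transform of $U^\perp$. This holds because the Macaulay pairing is $\operatorname{GL}_5$-equivariant when $\Ttwo$ carries the dual action. Everything else is formal. In particular, Chevalley's theorem is what lets dominance give an open set rather than merely a dense image.
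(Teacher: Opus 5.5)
Your proposal is correct and follows the paper's route. You take the Erman--Velasco open set $\Omega$, use dominance on the irreducible source to see that $\Phi_r(\operatorname{GL}_5\times\Omega)$ is dense, and conclude with Lemma~\ref{lem:dense-smoothable}. You also make explicit two points the paper's proof uses without comment: linear coordinate changes preserve smoothability, so the whole $\operatorname{GL}_5$-saturation consists of smoothable ideals, and Chevalley's theorem is what upgrades the dense image to a dense open subset.
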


\begin{proof}
By the Erman--Velasco construction recalled above, there is a nonempty open
subset $\Omega\subset (\A^5)^r$ such that the corresponding spans of the
quadrics $q(a^{(j)})$ give smoothable one-step ideals.  The source
$\operatorname{GL}_5\times (\A^5)^r$ is irreducible.  If $\Phi_r$ is dominant,
then the intersection of $\operatorname{GL}_5\times\Omega$ with the domain
of definition of $\Phi_r$ is still a nonempty open subset, and its image is
dense in $\Gr(r,\Ttwo)$.  Thus a dense open subset of $\Gr(r,\Ttwo)$
parametrizes smoothable one-step ideals.  The final assertion follows from
Lemma~\ref{lem:dense-smoothable}.
\end{proof}

Thus the main task is to prove dominance of $\Phi_r$ for $6\le r\le 14$.

\section{Differential rank certificates}

We prove dominance by an exact differential rank computation over
$\F_{32003}$.  The target Grassmannian has dimension
\[
  \dim \Gr(r,\Ttwo)=r(15-r).
\]
It is therefore enough to find one point at which the differential of
$\Phi_r$ has rank $r(15-r)$.

Let
\[
  U=\langle q_1,\ldots,q_r\rangle\subset \Ttwo,
  \qquad q_j=q(a^{(j)}).
\]
The tangent space of the Grassmannian at $U$ is
\[
  T_U\Gr(r,\Ttwo)=\Hom(U,\Ttwo/U).
\]
The differential of $\Phi_r$ at $(1,a^{(1)},\ldots,a^{(r)})$ has two types of
columns.

First, varying the point $a^{(j)}$ in the $\ell$-th coordinate sends
\[
  q_j \longmapsto
  \frac{\partial q(a^{(j)})}{\partial a^{(j)}_\ell}
  =y_\ell^2-2\left(\sum_{i=1}^5 a_i^{(j)}y_i\right)y_\ell
  \pmod U,
\]
and sends $q_i$ to $0$ for $i\neq j$.  Second, an infinitesimal matrix
$H\in\mathfrak{gl}_5$ sends
\[
  q_j\longmapsto H\cdot q_j \pmod U,
  \qquad j=1,\ldots,r.
\]
If $q_j$ is represented by a symmetric matrix $M_j$, this infinitesimal action
is represented by
\[
  M_j\longmapsto HM_j+M_jH^{\mathsf T}.
\]

We use the ordered basis
\[
\begin{split}
  \mathcal B=(&y_1^2,y_1y_2,y_1y_3,y_1y_4,y_1y_5,
  y_2^2,y_2y_3,y_2y_4,y_2y_5, \\
  &y_3^2,y_3y_4,y_3y_5,
  y_4^2,y_4y_5,y_5^2)
\end{split}
\]
of $\Ttwo$.  The following proposition is the computational input.

\begin{proposition}\label{prop:rank-certificates}
Let $A_{14}$ be the following $14\times 5$ matrix over $\F_{32003}$:
\[
A_{14}=\begin{pmatrix}
0&0&2&2&2\\
0&1&2&2&1\\
1&0&0&2&1\\
2&0&2&2&2\\
0&2&0&2&0\\
1&2&1&0&2\\
2&2&2&0&1\\
1&0&1&2&2\\
2&2&2&2&0\\
1&0&2&0&0\\
1&2&2&2&2\\
2&1&2&2&1\\
0&1&1&2&0\\
1&2&1&1&1
\end{pmatrix}.
\]
For $6\le r\le 14$, let $A_r$ be the submatrix consisting of the first $r$
rows of $A_{14}$.  At the point $(1,A_r)$, the quadrics
$q(a^{(1)}),\ldots,q(a^{(r)})$ are linearly independent and the differential
of $\Phi_r$ has rank $r(15-r)$.
\end{proposition}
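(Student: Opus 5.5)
The statement is a finite exact computation, so the plan is to make the certificate explicit and reduce everything to two rank computations over $\F_{32003}$ for each $6\le r\le 14$.

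\emph{Linear independence.} For each $r$ and each row $a^{(j)}$ of $A_r$, form the coordinate vector of $q(a^{(j)})$ in the basis $\mathcal B$. The coefficient of $y_i^2$ is $a_i-a_i^2$, and the coefficient of $y_iy_l$ ($i<l$) is $-2a_ia_l$. This gives an $r\times 15$ matrix $C_r$ with entries in $\F_{32003}$. Since $C_r$ is a submatrix of $C_{14}$, it suffices to check that $C_{14}$ has rank $14$, for example by exhibiting a nonzero $14\times 14$ minor or by Gaussian elimination.

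\emph{The differential.} Complete the rows of $C_r$ to a basis of $\Ttwo$ by adjoining $15-r$ standard basis vectors from $\mathcal B$. Choose these so that the completed $15\times 15$ matrix is invertible, which can be read off from the pivot columns of the row echelon form. Let $\pi\colon \Ttwo\to \Ttwo/U\cong k^{15-r}$ be the induced projection, obtained by inverting the completed matrix and keeping the last $15-r$ coordinates. Identify $\Hom(U,\Ttwo/U)$ with $k^{r(15-r)}$ using the basis $q_1,\dots,q_r$ of $U$. The differential matrix $D_r$ then has $25+5r$ columns:
\begin{itemize}
\item For $H=E_{st}$ in $\mathfrak{gl}_5$, the column is $\bigl(\pi(E_{st}M_j+M_jE_{ts})\bigr)_{j=1}^r$.
\item For the variation of $a^{(j)}_\ell$, the column is $\pi\bigl(y_\ell^2-2(\sum_i a^{(j)}_iy_i)y_\ell\bigr)$ placed in block $j$, with zeros in all other blocks.
\end{itemize}
The claim for $r$ is that $\rank D_r=r(15-r)$, and this is checked by exact elimination mod $32003$. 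A necessary count is $25+5r\ge r(15-r)$, that is $r^2-10r+25\ge 0$, which always holds, with equality only at $r=5$. So for $r\ge 6$ there is slack. A cleaner test is that $D_r$ has full row rank, which can be certified by exhibiting an invertible maximal square submatrix.

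\emph{Lifting to characteristic zero.} The paper's conclusion concerns $k$ of characteristic zero, so the rank statement over $\F_{32003}$ has to be transported. The matrices $C_r$ and $D_r$ are defined over $\Z$ at the integer point $(1,A_r)$, with entries in $\{0,1,2\}$. A nonvanishing minor mod $32003$ is a nonzero integer, so the same ranks hold over $\Q\subset k$. The projection $\pi$ involves inverting a matrix, so either its determinant must be checked to be a unit mod $p$, or better, $\pi$ should be avoided altogether. To avoid it, use the equivalent formulation
\[
\rank\bigl[\,D_r^{\mathrm{lift}}\mid (U\text{-ambiguity columns})\,\bigr]=15r,
\]
where $D_r^{\mathrm{lift}}$ lists the vectors in $\Ttwo^{\,r}$ before reduction mod $U$, and the ambiguity columns are the $r^2$ vectors $e_j\otimes q_i$. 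Everything is then integral, and the nonzero minor lifts directly. Full rank of the differential at this point then gives dominance of $\Phi_r$ for the dimension reasons recalled above.

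\emph{Main obstacle.} The only real difficulty is organisational: producing the nine rank certificates reproducibly, as in the ancillary Macaulay2 script. If one of the ranks failed at the given point, the fallback would be to perturb $A_{14}$; but the statement fixes $A_{14}$, so the proof is simply running the check. The one genuinely mathematical point to handle carefully is the characteristic-zero transfer through integrality and semicontinuity of rank, as described above.
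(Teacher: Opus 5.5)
Your proposal follows the paper's proof: the augmented test $\rank\bigl[\,D_r^{\mathrm{lift}}\mid (e_j\otimes q_i)\bigr]=15r$ is exactly the paper's intrinsic computation $\rank[\,C\mid G\,]-\rank G=r(15-r)$ with $\rank G=r^2$. Your integrality argument for carrying the nonvanishing minors to characteristic zero is what the paper does in Lemma~\ref{lem:lifting-dominance}.

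One small correction to your side remark on counting. The scalar direction $H=I$ sends each $q_j$ to $2q_j\in U$, so the rank of $d\Phi_r$ is at most $24+5r$. At $r=6$ this bound equals $r(15-r)=54$, so the count is tight there, not slack.
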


\begin{proof}
All computations are exact over $\F_{32003}$.  For a row $a$ of $A_r$, form the
coefficient vector of
\[
  q(a)=\sum_i a_i y_i^2-\left(\sum_i a_i y_i\right)^2
\]
in the basis $\mathcal B$.  Let $U(A_r)$ be the $r\times 15$ matrix with these
coefficient vectors as rows.

Instead of choosing a complement to $U=\operatorname{rowspan}U(A_r)$, we compute
the rank in the quotient intrinsically.  Let $C$ be the matrix whose columns
are the $5r+25$ infinitesimal direction vectors described above, regarded as
elements of $\Ttwo^{\oplus r}$.  Let $G$ be the matrix whose columns are the
$r$ basis vectors of $U$ placed in each of the $r$ slots of $\Ttwo^{\oplus r}$.
Then the rank of the differential in
\[
  \Hom(U,\Ttwo/U)\simeq (\Ttwo/U)^r
\]
is
\[
  \rank[\,C\mid G\,]-\rank G.
\]
Since the rows of $U(A_r)$ are independent, $\rank G=r^2$.

The ancillary Macaulay2 script computes the following table:
\[
\begin{array}{c|c|c|c|c}
 r & \rank U(A_r) & \rank[\,C\mid G\,] & \rank G
   & \rank d\Phi_r \\
\hline
 6  & 6  & 90  & 36  & 54 \\
 7  & 7  & 105 & 49  & 56 \\
 8  & 8  & 120 & 64  & 56 \\
 9  & 9  & 135 & 81  & 54 \\
 10 & 10 & 150 & 100 & 50 \\
 11 & 11 & 165 & 121 & 44 \\
 12 & 12 & 180 & 144 & 36 \\
 13 & 13 & 195 & 169 & 26 \\
 14 & 14 & 210 & 196 & 14
\end{array}
\]
For each row, the final entry equals $r(15-r)=\dim\Gr(r,\Ttwo)$.  Hence the
differential has full rank at $(1,A_r)$ for every $6\le r\le 14$.
\end{proof}

\begin{remark}\label{rmk:verification}
The computation in Proposition~\ref{prop:rank-certificates} is not randomized.
The matrix $A_{14}$ is fixed, and the script performs exact linear algebra over
$\F_{32003}$.  The same script also reports the sanity checks
\[
  \rank d\Phi_3=33<36,
  \qquad
  \rank d\Phi_5=45<50
\]
for the first three and first five rows of $A_{14}$, respectively.  This is
consistent with the known elementary component cases $r=3$ and $r=5$, where
$V_{5,r}$ is not smoothable: the method correctly does not certify
smoothability there.  It is a check on the implementation rather than an
independent proof, since the rank at a single point cannot show that $\Phi_r$
fails to be dominant.
\end{remark}

\begin{lemma}\label{lem:lifting-dominance}
For every $6\le r\le 14$, the map
\[
  \Phi_r:\operatorname{GL}_5\times (\A^5)^r\dashrightarrow \Gr(r,\Ttwo)
\]
is dominant over any algebraically closed field of characteristic zero.
\end{lemma}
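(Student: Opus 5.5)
The plan is to lift the finite field rank certificate of Proposition~\ref{prop:rank-certificates} to characteristic zero by a semicontinuity argument over $\operatorname{Spec}\Z$ (localized), and then use generic smoothness in characteristic zero to pass from full rank at one point to dominance.

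\textbf{Step 1: integral model of the differential.} All entries of $A_{14}$ lie in $\{0,1,2\}$, so they are integers, and we view $A_r$ as an integer matrix. The coefficient matrix $U(A_r)$, the direction matrix $C$, and the matrix $G$ are defined by polynomials with integer coefficients in the entries of $A_r$. The differential is computed at the identity of $\operatorname{GL}_5$, so we may take $g=1$. Hence these matrices have integer entries, and their reductions modulo $p=32003$ are exactly the matrices used in Proposition~\ref{prop:rank-certificates}.

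\textbf{Step 2: rank over $\Q$ is at least the rank over $\F_p$.} If a matrix over $\Z$ has a nonzero $m\times m$ minor modulo $p$, that minor is a nonzero integer, so the rank over $\Q$ is at least the rank over $\F_p$.

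Applied to $U(A_r)$, this shows the quadrics $q(a^{(j)})$ are linearly independent over $\Q$. Hence $(1,A_r)$ lies in the domain of $\Phi_r$, and $\rank G=r^2$ over $\Q$ as well.

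Applied to $[\,C\mid G\,]$, it gives $\rank_\Q[\,C\mid G\,]\ge r^2+r(15-r)$. The rank of the differential over $\Q$, namely $\rank[\,C\mid G\,]-\rank G$, is therefore at least $r(15-r)$. It cannot exceed $\dim\Gr(r,\Ttwo)=r(15-r)$, so it equals $r(15-r)$. For any algebraically closed $k$ of characteristic zero, $\Q\subset k$ and ranks are unchanged by field extension, so $d\Phi_r$ has full rank $r(15-r)$ at the $k$-point $(1,A_r)$.

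The one point to state carefully is that the intrinsic formula $\rank[\,C\mid G\,]-\rank G$ computes the rank of the differential into $\Hom(U,\Ttwo/U)$ over any field. This holds because the column span of $G$ is exactly $U^{\oplus r}\subset\Ttwo^{\oplus r}$, and quotienting by it identifies $\Ttwo^{\oplus r}/U^{\oplus r}$ with $(\Ttwo/U)^r\simeq\Hom(U,\Ttwo/U)$.

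\textbf{Step 3: from full rank to dominance.} The source $X=\operatorname{GL}_5\times(\A^5)^r$ (restricted to the domain of definition) is irreducible and smooth, and the target $\Gr(r,\Ttwo)$ is irreducible and smooth of dimension $r(15-r)$. Let $Z$ be the closure of the image of $\Phi_r$, which is irreducible. The differential at $x=(1,A_r)$ factors through $T_{\Phi_r(x)}Z$, so
\[
  r(15-r)=\rank d\Phi_r(x)\le \dim T_{\Phi_r(x)}Z.
\]
This tangent-space bound alone does not give $\dim Z$. The main obstacle is to turn full rank at a possibly special point into a statement about $\dim Z$.

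To handle it, use that the rank of $d\Phi_r$ is lower semicontinuous on $X$: the locus where it equals $r(15-r)$ is open, and it is nonempty by Step~2, so it is dense in $X$. By generic smoothness in characteristic zero, there is a dense open subset of $X$ on which $\Phi_r\colon X\to Z$ is smooth, with $\rank d\Phi_r=\dim Z$ there. Intersecting this with the dense open full-rank locus gives $\dim Z=r(15-r)$. Since $Z$ is a closed irreducible subvariety of $\Gr(r,\Ttwo)$ of the same dimension, $Z=\Gr(r,\Ttwo)$, so $\Phi_r$ is dominant.
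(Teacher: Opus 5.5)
Your proposal is correct and follows essentially the same route as the paper: lift the nonvanishing minors of $U(A_r)$ and $[\,C\mid G\,]$ from $\F_{32003}$ to $\Q$, obtaining a characteristic zero point where $d\Phi_r$ has rank $r(15-r)$, and then conclude that the closure of the image has full dimension. The one difference is in that last step: the paper simply asserts that the rank of the differential at a point bounds the dimension of the image closure, while your lower semicontinuity plus generic smoothness argument actually proves this, and you rightly note that the tangent space bound at a possibly singular image point does not give it on its own.
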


\begin{proof}
The entries of the matrices used in Proposition~\ref{prop:rank-certificates}
are defined over $\Z[1/2]$.  The only division by $2$ occurs when converting a
quadratic form in coefficient coordinates to its symmetric matrix, and $2$ is
a unit modulo $32003$.

Fix $r$.  Since $\rank U(A_r)=r$ over $\F_{32003}$, some $r\times r$ minor of
$U(A_r)$ is nonzero modulo $32003$, and therefore the same minor is nonzero
over $\Q$.  Hence the quadrics are linearly independent at the corresponding
characteristic zero point.  Similarly, the equality
\[
  \rank[\,C\mid G\,]=15r
\]
over $\F_{32003}$ means that some maximal minor of $[\,C\mid G\,]$ is nonzero
modulo $32003$.  The same minor is therefore nonzero over $\Q$.  Also
$\rank G=r^2$ over $\Q$.  Consequently
\[
  \rank_{\Q} d\Phi_r=15r-r^2=r(15-r).
\]
After base change to any algebraically closed field of characteristic zero,
the differential is surjective at this point.

Restrict $\Phi_r$ to its domain of definition.  The source is irreducible and
the target Grassmannian is irreducible and smooth of dimension $r(15-r)$.
The rank of the differential at a point is bounded above by the dimension of
the closure of the image.  Since this rank equals $r(15-r)$, the closure of the
image has the same dimension as the Grassmannian.  Hence the closure of the
image is the whole Grassmannian, and $\Phi_r$ is dominant.
\end{proof}

\begin{corollary}\label{cor:tail-smoothable}
For every $6\le r\le 14$,
\[
  V_{5,r}\subset \Rsm^5_{6+r}.
\]
\end{corollary}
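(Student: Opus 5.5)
The corollary is a direct chaining of the results just established, so the proof will be a short composition rather than a new argument. Fix $r$ with $6\le r\le 14$. First, invoke Lemma~\ref{lem:lifting-dominance} to obtain that $\Phi_r\colon \operatorname{GL}_5\times(\A^5)^r\dashrightarrow \Gr(r,\Ttwo)$ is dominant over $k$. This lemma rests on the exact rank certificate of Proposition~\ref{prop:rank-certificates} over $\F_{32003}$, lifted to characteristic zero through the non-vanishing of a suitable minor.

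Second, apply Lemma~\ref{lem:dominance-implies-smoothable}. Dominance of $\Phi_r$, together with the Erman--Velasco open set $\Omega\subset(\A^5)^r$ of smoothable spans, gives a dense open subset of $\Gr(r,\Ttwo)$ parametrizing smoothable ideals $I_U=(U^\perp)+\m^3$. Lemma~\ref{lem:dense-smoothable} then upgrades this to $V_{5,r}\subset\Rsm^5_{6+r}$, using closedness of $\Rsm^5_{6+r}$ and its invariance under translations.

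The only point needing care is the interface between the two lemmas. One must check that $\operatorname{GL}_5\times\Omega$ meets the domain of definition of $\Phi_r$, and that the $\operatorname{GL}_5$-translate $g\cdot U$ of a smoothable $U$ is still smoothable. The first holds because both sets are nonempty opens in an irreducible variety. The second holds because $g$ acts as a linear automorphism of $\A^5$ fixing the origin, and $\Rsm^5_{6+r}$ is invariant under automorphisms of $\A^5$. Lemma~\ref{lem:dominance-implies-smoothable} already packages both facts, so I expect no genuine obstacle. The substantive difficulty, the rank certificate itself, has been discharged in Proposition~\ref{prop:rank-certificates} and Lemma~\ref{lem:lifting-dominance}.
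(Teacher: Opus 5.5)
Your proposal is correct and matches the paper's proof, which simply chains Lemma~\ref{lem:lifting-dominance} with Lemma~\ref{lem:dominance-implies-smoothable} (and, through it, Lemma~\ref{lem:dense-smoothable}). You also make explicit a point that the paper's proof of Lemma~\ref{lem:dominance-implies-smoothable} uses only implicitly: $\operatorname{GL}_5$-translates of smoothable ideals stay smoothable, because linear automorphisms of $\A^5$ preserve $\Rsm^5_{6+r}$.
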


\begin{proof}
This follows immediately from Lemma~\ref{lem:lifting-dominance} and
Lemma~\ref{lem:dominance-implies-smoothable}.
\end{proof}

\section{\texorpdfstring{The endpoint $r=15$}{The endpoint r=15}}

It remains to handle $r=15$.  Since $\dim_k S_2=15$, this is the maximal
codimension, and no one-step locus has $r>15$.  Here $Q=0$, so the homogeneous
one-step ideal is simply $\m^3$ and the local algebra is
\[
  S/\m^3,
  \qquad
  \dim_k S/\m^3=1+5+15=21.
\]

\begin{lemma}\label{lem:m3-smoothable}
The algebra $S/\m^3$ is smoothable.  Hence
\[
  V_{5,15}\subset \Rsm^5_{21}.
\]
\end{lemma}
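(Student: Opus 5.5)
We prove smoothability of $S/\m^3$ by exhibiting $\m^3$ as a flat limit of ideals of $21$ reduced points, using a scaling degeneration. Choose $21$ points $p_1,\ldots,p_{21}\in\A^5$ in general position, so that no nonzero polynomial of degree $\le 2$ vanishes on all of them. This is possible because $\dim S_{\le 2}=21$: the evaluation map $S_{\le 2}\to k^{21}$ is an isomorphism for general points, since its determinant is a nonzero polynomial in the coordinates of the points (nonzero because monomials are linearly independent functions on $\A^5$). Let $J\subset S$ be the ideal of $\{p_1,\ldots,p_{21}\}$. Then $J\cap S_{\le 2}=0$ and $\dim S/J=21$.

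Next we consider the family $Z_t=\{tp_1,\ldots,tp_{21}\}$ for $t\ne 0$ and its flat limit at $t=0$ in $\Hilb^{21}(\A^5)$, which exists because the Hilbert scheme is proper over $\operatorname{Sym}^{21}\A^5$. The standard description identifies this limit with the ideal of lowest-degree forms:
\[
  \initt(J)=\bigl(f_{\min} : 0\ne f\in J\bigr),
\]
where $f_{\min}$ denotes the lowest-degree homogeneous component of $f$. The justification is as follows. The ideal of $Z_t$ is $\{f(x/t):f\in J\}$. Multiplying by $t^{\deg f_{\min}}$ and letting $t\to0$ shows that every lowest form lies in the limit ideal. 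Equality then holds because both the limit and $S/\initt(J)$ have length $21$: the vector space dimension of $S/\initt(J)$ equals that of $S/J$, via the standard filtration argument.

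It remains to identify $\initt(J)=\m^3$. Since $J$ contains no nonzero element of degree $\le2$, every lowest form has degree $\ge 3$, so $\initt(J)\subset\m^3$. Both ideals have colength $21$, since $\dim S/\m^3=21$. Hence they are equal. Therefore $\m^3$ is a flat limit of $21$ distinct reduced points and lies in $\Rsm^5_{21}$. Translating by automorphisms of $\A^5$ and taking closure, exactly as in Lemma~\ref{lem:dense-smoothable} (here the homogeneous locus is the single point $\m^3$), gives $V_{5,15}\subset\Rsm^5_{21}$.

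The main point needing care is the equality $\dim S/\initt(J)=\dim S/J$, or equivalently that the flat limit is exactly $\initt(J)$. The obstacle can be avoided entirely by arguing containment: the flat limit $I_0$ contains $\initt(J)\supseteq$ nothing smaller is needed, since we only need $I_0\supseteq$ the lowest forms. Alternatively, we argue directly that $I_0\subset\m^3$, because $I_0$ has colength $21$ and is homogeneous ($\mathbb G_m$-fixed). Its degree $\le2$ part is zero, since any element of $(I_0)_{\le 2}$ would lift to $J_t\cap S_{\le2}=0$ for $t\ne0$ by upper semicontinuity of $\dim (I_t\cap S_{\le 2})$. Combined with colength $21=\dim S/\m^3$, this forces $I_0=\m^3$.
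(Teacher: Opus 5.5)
There is a genuine gap: you degenerate using the wrong initial forms. The ideal of $Z_t$ is $\{f(x/t): f\in J\}$, and for $f=f_m+\cdots+f_D$ with $f_m=f_{\min}\neq0$ and $f_D\neq0$ you get $t^{m}f(x/t)=\sum_{i=m}^{D}t^{m-i}f_i$. This diverges as $t\to0$ whenever $D>m$. The expression that converges is $t^{D}f(x/t)=\sum_i t^{D-i}f_i\to f_D$. So when the points collapse to the origin, the limit ideal contains the \emph{top}-degree forms, not the lowest ones.

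Your lowest-form ideal is in fact the unit ideal. A general $\Gamma$ does not contain the origin, so $J\not\subseteq\m$, and some $f\in J$ has $f(0)\neq0$; its lowest form is a nonzero constant. For the same reason, the step ``$J$ has no nonzero element of degree $\le2$, so every lowest form has degree $\ge3$'' is a non sequitur: that hypothesis bounds total degree, i.e.\ top forms. Likewise, the colength equality $\dim S/\initt(J)=\dim S/J$ fails for lowest forms. The degree-filtration argument proves it for the ideal of top forms.

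Your fallback does not repair this. Homogeneity of $I_0$ is fine, since the limit of a $\mathbb G_m$-orbit is $\mathbb G_m$-fixed. But semicontinuity runs the wrong way. $I_t\cap S_{\le2}$ is the kernel of $S_{\le2}\to S/I_t$, a map between rank-$21$ bundles whose rank is lower semicontinuous. So the kernel can only jump \emph{up} at $t=0$, and elements of $(I_0)_{\le2}$ need not lift to nearby fibres. For example, the points $(0,0),(1,0),(2,t)$ in $\A^2$ acquire the linear form $y$ at $t=0$. Homogeneity plus colength $21$ alone does not force $(I_0)_{\le2}=0$ either; consider $(x_2,\ldots,x_5,x_1^{21})$.

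The missing ingredient is exactly the top-form specialization. For each monomial $x^\alpha$ of degree $3$, interpolation gives $h\in S_{\le2}$ with $x^\alpha-h\in J$. Then $t^3(x^\alpha-h)(x/t)=x^\alpha-t^3h_0-t^2h_1-th_2\to x^\alpha$, so $x^\alpha\in I_0$. Hence $\m^3\subseteq I_0$, and equal colength $21$ forces $I_0=\m^3$. If you replace ``lowest'' by ``top'' throughout and $t^{\deg f_{\min}}$ by $t^{\deg f}$, your first argument becomes the paper's proof. The paper shows the top-form ideal has colength $21$, so it equals $I_0$, and it lies in $\m^3$, so it equals $\m^3$.
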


\begin{proof}
Choose $21$ general reduced points
\[
  \Gamma=\{p_1,\ldots,p_{21}\}\subset \A^5.
\]
Since
\[
  \dim_k S_{\le 2}=1+5+15=21,
\]
we may and do choose $\Gamma$ so that the evaluation map
\[
  S_{\le 2}\longrightarrow k^{21},
  \qquad f\longmapsto (f(p_1),\ldots,f(p_{21}))
\]
is an isomorphism.  Such configurations exist by the usual interpolation
induction over an infinite field: having chosen fewer than $21$ points, one can
choose the next point outside the zero locus of a nonzero polynomial in the
remaining linear system.  Let $I_\Gamma$ be the ideal of $\Gamma$.  Scale the
configuration by $t$, obtaining a family $t\Gamma$ for $t\ne0$, and take its
flat limit as $t\to0$.  Write $J=\initt_{\deg}^{\mathrm{top}}(I_\Gamma)$ for
the ideal generated by the top forms of the elements of $I_\Gamma$, where the
top form of $f=f_0+f_1+\cdots+f_D$ (each $f_i$ homogeneous, $f_D\ne0$) is
$f_D$.  For any such $f$ the polynomial $t^D f(x/t)$ lies in the ideal of
$t\Gamma$ when $t\ne0$ and specializes to $f_D$ as $t\to0$, so $f_D$ lies in
the limit ideal $I_0$.  Hence $J\subseteq I_0$.

Passing to top forms does not change the colength, so
$\dim_k S/J=\dim_k S/I_\Gamma=21$.  The family $t\Gamma$ is flat, so
$\dim_k S/I_0=21$ as well.  Combined with $J\subseteq I_0$, this forces
$J=I_0$.

Because the evaluation map on $S_{\le 2}$ is injective, $I_\Gamma$ contains no
nonzero polynomial of degree at most $2$, so every nonzero element of
$I_\Gamma$ has top form of degree at least $3$.  Thus $J$ contains no nonzero
form of degree $0$, $1$, or $2$, while $\dim_k S/J=21$.  The monomials of
degrees $0$, $1$, and $2$ already span a space of dimension $21$, so they form
a basis of $S/J$ and every form of degree at least $3$ lies in $J$.  Therefore
\[
  I_0=J=\m^3.
\]
Thus $S/\m^3$ is a flat limit of $21$ reduced points.  Translations preserve
smoothability, giving the assertion for $V_{5,15}$.
\end{proof}

\begin{proof}[Proof of Theorem~\ref{thm:main-tail}]
The cases $6\le r\le 14$ are Corollary~\ref{cor:tail-smoothable}.  The case
$r=15$ is Lemma~\ref{lem:m3-smoothable}.
\end{proof}

\section{Consequences for the embedding dimension five gap}

We now place the result beside the previously known cases and the existing
tangent space predictions.  The theorem solves the one-step problem for
Hilbert function $(1,5,r)$ in embedding dimension five.  It does not address the
problem of classifying all components of all Hilbert schemes in embedding
dimension five.
The following table summarizes the one-step loci $V_{5,r}$.
\[
\begin{array}{c|c|c|l}
 r & \text{Hilbert function} & \text{length} & \text{status} \\
\hline
 1 & (1,5,1)  & 7  & \text{smoothable by small length results} \\
 2 & (1,5,2)  & 8  & \text{smoothable by small length results} \\
 3 & (1,5,3)  & 9  & \text{generically reduced elementary component} \\
 4 & (1,5,4)  & 10 & \text{smoothable} \\
 5 & (1,5,5)  & 11 & \text{generically reduced elementary component} \\
 6\le r\le 14 & (1,5,r) & 6+r & \text{smoothable by Theorem~\ref{thm:main-tail}} \\
 15 & (1,5,15) & 21 & \text{smoothable by Lemma~\ref{lem:m3-smoothable}}
\end{array}
\]
Thus the one-step embedding dimension five picture is exactly as stated in
Corollary~\ref{cor:classification}: the only elementary component cases are
$r=3$ and $r=5$, and all other one-step loci are contained in the smoothable
component.  The first five rows reproduce the previously known results: the
small smoothable cases $r=1,2$, the classical elementary case $r=3$, the
smoothable exceptional case $r=4$, and the elementary case $r=5$ identified in
recent work.  The new dominance certificates supply precisely the missing
smoothability assertions for the tail $6\le r\le 14$, while the endpoint
$r=15$ is handled separately.  Hence the table gives a complete answer to the
one-step Shafarevich gap question in embedding dimension five.

This classification matches the existing embedding dimension five results.
The rows $r=1,2$ agree with the small length smoothability theorem of
Cartwright--Erman--Velasco--Viray.  The rows $r=3,4$ agree with the low degree
component analysis of Ga\l{}\k{a}zka--Keneshlou--\v{S}ivic.  The row $r=5$
agrees with the elementary component result of
Giovenzana--Giovenzana--Graffeo--Lella.  It also does not contradict the
existing tangent space predictions.  Those predictions distinguish when the
one-step locus can be a generically reduced elementary component.  They do not,
by themselves, prove smoothability.  The new dominance certificates supply the
missing smoothability statement for the remaining tail.  In particular, the
borderline case $r=8$ and the range $r>8$ do not give new one-step
elementary components in embedding dimension five.  The corresponding one-step
loci lie in the smoothable component.

For $r\ne3,5$, the containment in the smoothable component is proper.  Indeed,
\[
  \dim V_{5,r}=5+r(15-r),
  \qquad
  \dim \Rsm^5_{6+r}=5(6+r),
\]
and these dimensions are unequal for every $r\ne5$.  In particular they are
unequal in every smoothable case $r\ne3,5$.  Hence the smoothable one-step
loci are not irreducible components of the Hilbert scheme.

\appendix

\section{The coordinate rank algorithm}\label{app:algorithm}

This appendix records the finite field linear algebra used in
Proposition~\ref{prop:rank-certificates}.  The ancillary Macaulay2 file
implements exactly this algorithm.

Work over $\F_{32003}$ and use the ordered basis
\[
\begin{split}
  \mathcal B=(&y_1^2,y_1y_2,y_1y_3,y_1y_4,y_1y_5,
  y_2^2,y_2y_3,y_2y_4,y_2y_5, \\
  &y_3^2,y_3y_4,y_3y_5,
  y_4^2,y_4y_5,y_5^2)
\end{split}
\]
of $\Ttwo$.  For $a=(a_1,\ldots,a_5)$, the coefficient vector of
\[
  q(a)=\sum_i a_i y_i^2-\left(\sum_i a_i y_i\right)^2
\]
has diagonal coefficients
\[
  a_i-a_i^2
\]
and off-diagonal coefficients
\[
  -2a_i a_j\qquad (i<j).
\]
The derivative in the $a_\ell$ direction is
\[
  \frac{\partial q(a)}{\partial a_\ell}
  =y_\ell^2-2\left(\sum_i a_i y_i\right)y_\ell.
\]
Thus its coefficient in the $y_\ell^2$ position is $1-2a_\ell$, its
coefficient in the other square positions is $0$, and its coefficient in the
$y_i y_\ell$ position is $-2a_i$ for $i\ne\ell$.

Given rows $a^{(1)},\ldots,a^{(r)}$, form the $r\times15$ matrix $U$ whose
rows are the vectors $q(a^{(j)})$.  The tangent space at
$U\subset\Ttwo$ is
\[
  \Hom(U,\Ttwo/U)\simeq (\Ttwo/U)^r.
\]
There are two families of infinitesimal directions.

\begin{enumerate}[label=(\arabic*)]
\item For the parameter $a^{(j)}_\ell$, the corresponding vector in
$\Ttwo^{\oplus r}$ is zero in every slot except the $j$-th slot, where it is
$\partial q(a^{(j)})/\partial a^{(j)}_\ell$.
\item For a matrix unit $E_{uv}\in\mathfrak{gl}_5$, first convert the
coefficient vector of $q(a^{(j)})$ to its symmetric matrix $M_j$.  With the
basis convention above, the off-diagonal coefficient of $y_i y_j$ is twice the
$(i,j)$ entry of $M_j$.  The $j$-th slot of the infinitesimal direction is the
coefficient vector of
\[
  E_{uv}M_j+M_jE_{uv}^{\mathsf T}.
\]
\end{enumerate}

Let $C$ be the matrix with these $5r+25$ direction vectors as columns.  Let
$G$ be the matrix whose columns are the $r$ rows of $U$ placed in each of the
$r$ slots of $\Ttwo^{\oplus r}$.  Then the differential rank is computed by
\[
  \rank[\,C\mid G\,]-\rank G.
\]
This avoids choosing quotient coordinates on $\Ttwo/U$ and is the form used in
the verifier.

\end{document}